\documentclass[12pt]{article}

\font\naam=cmssbx10 scaled \magstep1
\font\kop=cmss10 scaled \magstep1

\usepackage{amsmath, amsthm, amsfonts}
\DeclareMathSymbol{\subsetneq}{\mathord}{AMSb}{"26}

\newtheorem{lemma}{Lemma}[section]

\newtheorem{theorem}[lemma]{Theorem}

\newtheorem{proposition}[lemma]{Proposition}
\newtheorem{corollary}[lemma]{Corollary}
\theoremstyle{definition}

\newtheorem{definition}[lemma]{Definition}

\newtheorem{remark}[lemma]{Remark}

\newcommand{\lp}{\longrightarrow}

\newcommand{\mb}{\mathbb}

\newcommand{\C}{\mb{C}}

\newcommand{\Z}{\mb{Z}}
\newcommand{\N}{\mb{N}}

\begin{document}

\font\ti=cmssbx10 scaled \magstep2
\font\naam=cmssbx10 scaled \magstep1
\font\kop=cmss10 scaled \magstep1

\newpage
\setcounter{page}{1}

\title{$\C$-flows $A^z$ of linear maps $A$ expressed in terms of  $A^{-1},A^{-2},\ldots,A^{-n}$ and analytic functions of $z$.}
\author{Stefan Maubach\footnote{Funded by Veni-grant  from the Dutch Organization for Scientific Research (NWO)}\\ \ \\
\small
Radboud University Nijmegen\\\small Toernooiveld 1, The Netherlands\\ \small s.maubach@math.ru.nl}

\maketitle

\abstract{Suppose $A\in GL_n(\C)$ has a relation $A^p=c_{p-1}A^{p-1}+\ldots + c_1 A+ c_0I $ where the $c_i\in\C$.
This article describes how to construct analytic functions $c_i(z)$ such that $A^z=c_{p-1}(z)A^{p-1}+\ldots + c_1(z) A+ c_0(z)I $. 
One of the theorems gives a possible description of the $c_i(z)$: $c_i(z)=C^z\alpha$ where
$C\in Mat_p(\C)$ is (similar to) the companion matrix of $X^p-c_{p-1}X^{p-1}-\ldots -c_1X-c_0I$, and $\alpha:=
(c_{p-1},\ldots,c_1,c_0)^t$.}

\normalsize

\section{Introduction}

\subsection{Motivation}

Given an invertible linear map on a finite dimensional $\C$-vector space (which we will view in this article as a square matrix $A$), 
then it is possible to define ``$A^z$'' for each $z\in \C$ in a ``good way''. Let us define what we mean:

\begin{definition}
Given an invertible  map $f:\C^n\lp\C^n$,
we say that the map $\varphi: \C\times \C^n\lp \C^n$
is a $\C$-flow (sometimes ``exponent map'') of $f$ if it satisfies\\
(1) $\varphi(z,-)\varphi(w,-)=\varphi(z+w,-)$ for all $z,w\in \C$,\\
(2) $\varphi(1,-)= f$,\\
(3) $\varphi(0,-)=I$.
\end{definition}

So, $A^z$ is defined in a good way if it is $\C$-flow of $A$.
This is easily done if $A$ is on Jordan-normal form (or on upper triangular form), and then by conjugation one can do it for any invertible matrix. 

In this article we will describe how to find a $\C$-flow of $A$, given a relation $A^p=c_{p-1}A^{p-1}+\ldots + c_1 A+ c_0I $ where the $c_i\in\C$.
We do this by giving a construction of analytic functions $c_i(z)$ that only depend on the $c_i$, such that $A^z$ can be defined as 
$c_{p-1}(z)A^{p-1}+\ldots + c_1(z) A+ c_0(z)I$. 
This may be more efficient in the case the matrix $A$ is of significantly larger size than $p$; 
then it is  cheap to compute the functions $c_i(z)$ and $A^i$ where $0\leq i \leq p-1$, and  expensive to compute a formula $A^z$ by a direct method. 

Next to this purely linear algebra motivation, there is another one that originates in a  problem not concerning linear maps, 
which we will only quote as a motivation. It concerns the more general polynomial maps  
$F:\C^n\lp \C^n$ that satisfy relations $F^n=c_{n-1}F^{n-1}+\ldots c_1F+c_0I$ where $c_i\in \C$.
It was conjectured in \cite{A} (or see \cite{B} paragraph 4.3)that such maps are an exponent of a so-called locally finite derivation.
Equivalently, it was enough to show that for such a polynomial map there exists
an exponent map, i.e. a set of polynomial maps $\{F_t~|~t\in \C\}$ such that $F_0=I, F_1=F,$ and $F_tF_u=F_{t+u}$. 
The author found a surprising formula that gave this exponent map in all concrete cases:\\
\[
F_t:=(F^{-1},F^{-2},\ldots,F^{-n})
\left(
\begin{array}{ccccc}
c_{n-1} & 1 & 0 & \cdots &0\\
c_{n-2} & 0 & 1 & \cdots & 0\\
\hdots & \vdots & \vdots&\ddots &\hdots\\
c_1 & 0 & 0 & \cdots&1\\
c_0 & 0 & 0 & \cdots &0\\
\end{array}\right)^t
\left(
\begin{array}{c}
c_{n-1} \\ \hdots \\ c_1 \\ c_0
\end{array}
\right).\]
However, the author was not able to prove that this formula always worked in general. 
But, the fact that this formula works for linear maps is exactly this article. 

We will formulate our main results in section 2. 
In order to prove our results, it is necessary to recall and notice some basic linear algebra facts, which we do in section 3. 
In the same section we will discuss basic facts on constructing the map $z\lp A^z$ in a well-defined way.  
In section 4 we will briefly discuss the set $V_A:=\oplus_{i\in \N} \C A^i$.
This will be used in section 5 to prove the  main result. In section 6 we will discuss how one could compute the analytic functions in practice.

\subsection{Notations}

$Mat_n(\C)$ will denote the set of $n\times n$ matrices, where $GL_n(\C)$ will denote the subset of $Mat_n(\C)$ of invertible matrices. 
$I_n$ will denote the identity map in $GL_n(\C)$.
$N_n$ will denote the map in $Mat_n(\C)$ which is zero everywhere except at the $n-1$ positions directly above the diagonal, 
where it is 1. If $n$ is known, it may be omitted, writing $I$ and $N$.

We will say that two matrices $A,B\in Mat_n(\C)$ are conjugate, if there exists $T\in GL_n(\C)$ such that $A=T^{-1}BT$. In particular, 
a matrix is always conjugate to one of its
Jordan normal forms. 

If $Q(T):=T^p-c_{p-1}T^{p-1}-c_{p-2}T^{p-2}-\ldots-c_1T-c_0$, then we define the {\em companion matrix} (though usually defined slightly different) of $Q$ as
\[
C_Q:=\left(
\begin{array}{ccccc}
c_0&1&0&\ldots&0\\
c_1&0&1&\ldots&0\\
&\vdots&    &\vdots\\
c_{p-1}&0&0&\ldots&1\\
c_p&0&0&\ldots&0\\
\end{array}\right).
\]
If $A\in Mat_n(\C)$, and $Q(T)$ is the characteristic polynomial of $A$, then we define the companion matrix of $A$ as $C_A:=C_Q$.

\section{Main results}

We pull the main results from this article to the front, and leave the proofs for later. 
In the below result, we assume that we have  a $\C$-flow $C_Q^z$.

\begin{theorem}\label{2005.5.26.C}
Let $A\in GL_n(\C)$, and $Q(X):=X^p-c_{p-1}X^{p-1}-\ldots -c_1X-c_0\in\C[X]$ such that $Q(A)=0$.
Let $c=(c_{p-1},\ldots,c_0)^t$, and $\mu_i(z):=(C_Q^zc)$. Then $A^z=\sum_{i=1}^p \mu_i(z) A^{-i}$.
\end{theorem}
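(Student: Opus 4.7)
My plan is to realize the desired $\C$-flow of $A$ as the pushforward of the given flow $C_Q^z$ under the surjective algebra homomorphism $\pi:\C[C_Q]\to\C[A]$, $C_Q\mapsto A$, which is well-defined because $Q$ is the minimal polynomial of $C_Q$ and $Q(A)=0$.

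First I would place $C_Q^z$ inside $\C[C_Q]$. Since $C_Q$ is a companion matrix, it is non-derogatory---its minimal and characteristic polynomials both equal $Q$---so its centralizer in $Mat_p(\C)$ is exactly $\C[C_Q]$, a $p$-dimensional algebra. The flow relation $C_Q^z C_Q=C_Q C_Q^z$ then forces $C_Q^z\in\C[C_Q]$. Invertibility of $A$ lets me assume $c_0\neq 0$ (else factor $T$ out of $Q$), so $C_Q$ is invertible and $\{C_Q^{-1},\ldots,C_Q^{-p}\}$ is a basis of $\C[C_Q]$; expand $C_Q^z=\sum_i\nu_i(z)\,C_Q^{-i}$ uniquely.

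Next I would identify $\nu(z)=C_Q^zc=\mu(z)$ using the regular representation $\rho:\C[C_Q]\to\mathrm{End}(\C[C_Q])$ written in the basis $\{C_Q^{-i}\}$. A direct check yields $\rho(C_Q)=C_Q$: for $i\geq 2$, $C_Q\cdot C_Q^{-i}=C_Q^{-(i-1)}$ contributes column $e_{i-1}$; for $i=1$, $C_Q\cdot C_Q^{-1}=I$, whose coordinates (obtained by multiplying $Q(C_Q)=0$ by $C_Q^{-p}$) are $c=(c_{p-1},\ldots,c_0)^t$, giving the first column. Since every element of $\C[C_Q]$ is a polynomial in $C_Q$ of degree $<p$, the $\C$-algebra homomorphism $\rho$ satisfies $\rho(X)=X$ for all $X\in\C[C_Q]$; in particular $\rho(C_Q^z)=C_Q^z$. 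Applying this matrix to the coordinate vector of $I$, which is $c$, produces the coordinate vector of $C_Q^z\cdot I=C_Q^z$, i.e.\ $\nu(z)=C_Q^zc$.

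Finally, applying $\pi$ to $C_Q^z=\sum\mu_i(z)\,C_Q^{-i}$ gives $B(z):=\pi(C_Q^z)=\sum_{i=1}^p\mu_i(z)\,A^{-i}$ (using $\pi(C_Q^{-i})=A^{-i}$), and $B$ inherits the $\C$-flow axioms: $B(0)=\pi(I)=I$, $B(1)=\pi(C_Q)=A$, and $B(z+w)=\pi(C_Q^zC_Q^w)=B(z)B(w)$. The main obstacle I anticipate is the matrix computation identifying $\rho(C_Q)$ with $C_Q$, which requires pinning down the exact companion-matrix convention (the definition in the Notations section has slight indexing ambiguity in its last row) and reconciling it with the first-column orientation of $c$. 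A secondary subtlety is that $Q$ need only satisfy $Q(A)=0$ and need not be the minimal polynomial of $A$, so $C_Q$---where $Q$ genuinely is minimal---must serve as the intermediary carrying the intrinsic $p$-dimensional algebra structure rather than $A$ itself.
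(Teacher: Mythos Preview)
Your route via the surjection $\pi:\C[C_Q]\to\C[A]$ and the regular representation in the basis $\{C_Q^{-i}\}$ is elegant and genuinely different from the paper's. The paper works from the $A$ side: for $Q$ equal to the minimal polynomial $m$ it builds an explicit isomorphism $\tau:V_A\to V_{C_m}$ of the form $M\mapsto WMS$, proves $\tau(A^z)=C_m^z$ by a Jordan-block comparison (Proposition~\ref{2005.5.26.B}), and then reduces the general $Q$ to this case by enlarging $A$ to a matrix $\tilde A$ whose minimal polynomial is exactly $Q$ (Lemma~\ref{2005.5.25.A}). Your approach, by staying on the $C_Q$ side where $Q$ is already minimal, sidesteps the embedding lemma entirely; and your identification $\rho(C_Q)=C_Q$ is exactly the content of the paper's Lemma~\ref{04.4.N}, repackaged.

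There is, however, a real gap in your final step. You conclude by checking that $B(z):=\pi(C_Q^z)=\sum_i\mu_i(z)A^{-i}$ satisfies the $\C$-flow axioms. But the theorem, in the paper's setting, asserts equality with the \emph{specific} flow $A^z$ constructed in Section~3 from the same fixed branches $\lambda^z$ used to build $C_Q^z$. Since $\C$-flows are not unique (the paper stresses this), exhibiting \emph{some} $\C$-flow of $A$ does not yield $B(z)=A^z$. Concretely, writing $C_Q^z=p_z(C_Q)$ with $\deg p_z<p$, you have $B(z)=p_z(A)$, and what is missing is the verification $p_z(A)=A^z$. This can be supplied inside your framework: since $Q(A)=0$, every Jordan block $B_k(\lambda)$ of $A$ has $\lambda$ a root of $Q$ and $k\le n_\lambda$ (the multiplicity in $Q$), while $C_Q$ carries a single block $B_{n_\lambda}(\lambda)$; from $p_z(B_{n_\lambda}(\lambda))=B_{n_\lambda}(\lambda)^z$ one obtains $p_z(B_k(\lambda))=B_k(\lambda)^z$ by passing to the upper-left $k\times k$ corner, because both sides lie in $\C[N]$ and truncation $N_{n_\lambda}\mapsto N_k$ is an algebra homomorphism there. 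That Jordan-block truncation is precisely what the paper encodes in Remark~\ref{04.4.L} and the computation $(I0)J^z(I0)^t=\tilde J^z$ inside Proposition~\ref{2005.5.26.B}; you need an analogue of it, and the flow axioms alone do not provide it.
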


The proof of this theorem is in section 5.

Below we give a method to compute the functions $c_i(z)$ in a more direct way. This does not need a given $\C$-flow $C_Q^z$. 
The following is a concise summary of the results of section 6.

Suppose a matrix $A$ satisfies a relation
\[ A^p-c_{p-1}A^{p-1}-\ldots -c_1A-c_0I=0. \]
Then one can define a formula for $A^z$ in the following way:\\
\begin{itemize}
\item Compute the zeroes of $X^p-c_{p-1}X^{p-1}-\ldots -c_1X-c_0\in\C[X]$ and count their multiplicity.
Name the zeroes $\lambda_1,\ldots,\lambda_m$ with multiplicities $n_1,\ldots,n_m$.
\item Choose for each $1\leq i \leq m$ a function $\lambda_i^z$ (which should be an exponent map, i.e. $\C$-flow).
\item Define for each $1\leq i \leq m$, $0\leq j \leq n_i-1$ the functions $h_{ij}:=\lambda^{z-j} g_j(z)$
where $g_0(z)=1, g_j(z)=\frac{1}{j!}z(z-1)\cdots (z-j+1)$ if $j\geq 1$. Rename the $p$ functions $h_{ij}$ obtained in this way as $f_1(z),\ldots, f_p(z)$.
\item Compute the inverse of $(f_i(-j))_{ij}$ where $i,j$ run from 1 to $p$, and name the inverse $e_{ij}$.
\item Now
\[ A^z = \sum_{i=1}^p \sum_{j=1}^p e_{ij}f_j(z) A^{-i} .\]
\end{itemize}

\section{Preliminaries}

\subsection{Jordan normal form and minimum polynomial}

The remarks \ref{04.4.F} and \ref{04.4.L} are standard, and do not have difficult proofs, which are omitted.

\begin{remark}\label{04.4.F}
We have equivalence between
\begin{enumerate}
\renewcommand{\theenumi}{(\roman{enumi})}
\item $A\in GL_n(\C)$ has minimum polynomial of degree $n$;
\item The $m$ blocks of the Jordan-normal-form of $A\in GL_n(\C)$ have eigenvalues $\lambda_1,\ldots,\lambda_m$
which all differ;
\item $A\in GL_n(\C)$ is a conjugate of its companion matrix $C_A\in GL_n(\C)$.
\end{enumerate}
\end{remark}

\begin{remark}\label{04.4.L}
Let $m(X)$ be the minimum polynomial of $A\in Mat_n(\C)$ of degree $p$.
Then there exist $J\in Mat_n(\C), \tilde{J}\in Mat_p(\C)$, such that\\
\begin{enumerate}
\item
$J$ is a
Jordan normal form of $A$, and $\tilde{J}$ is a Jordan normal form of $C_m$,
\item
$\tilde{J}$ is the upper left $p\times p$ minor of $J$, consisting of a subset of the blocks appearing in $J$,
\item
per eigenvalue $\lambda$ occurring in $A$ (or $C_m$), there is only one block having this eigenvalue in
$\tilde{J}$,  and it has the maximum of the size of all the blocks having eigenvalue $\lambda$ in $A$.
\end{enumerate}
\end{remark}

\subsection{$\C$-flows of linear maps}

It is well-known how to make well-defined $\C$-flow $\C\times \C^n\lp \C^n:(z,v)\lp A^zv$ for a given $A\in GL_n(\C)$.
This section provides some of these details, and points out a pitfall which one should keep in
mind: the freedom in choosing {\em which} $\C$-flow.

Note that a $\C$-flow endorses the notation $f^z:=\varphi(z,-)$,
as this coincides with the normal notation $f^n=f\circ f\circ\cdots\circ f$ when $z=n\in\N$,
and $\varphi(-1,-)$ indeed is the same map as $f^{-1}$ since it is the inverse of $\varphi(1,-)=f$.
To take a trivial example, let $f=I$, and take $f^z=I$.
To take a little less trivial example for the same $f$, take $f^z:=e^{Im(z)} I$
(and $\varphi(z,x)=f^z(x)=e^{Im(z)}x$).
This shows one of the problems in finding $\C$-flows: they are not unique.
For some specific functions, like the map $x\lp ex$, a standard choice has been made by literature ($\varphi(z,x)\lp e^zx$).
But if one wants to define an exponent of the map $x\lp \lambda x$ where $\lambda\in \C^*$, then
one has to fix a value $\log(\lambda)$ such that $e^{\log(\lambda)}=\lambda$,
and then define $\varphi(z,x):=e^{z\log(\lambda)}x$.

In fact, in this article, we assume that we have {\bf fixed} for every $\lambda\in\C$  one such value
$\log(\lambda)$ and a corresponding map  $z\lp \lambda^z$.
Let us elaborate shortly on  how this lets us fix maps $z\lp A^z$ for each $A\in GL_n(\C)$, even though this is mainly standard technique,  
as we will need some of the details later.

\begin{definition}\label{05.4.A}
Define $g_0(z)=1$, $g_i(z):=\frac{1}{i!}z(z-1)\cdots (z-i+1)$ for all $i\in \N^*$.
Note that if $k\in \N$, then $g_i(k)= {k\choose i}$ (even if $k<i$).\\
Given $\lambda\in\C^*$, define $B_{n}(\lambda):=\lambda I + N\in GL_n(\C)$.
\end{definition}

\begin{lemma}\label{05.4.B}
Let  $\varphi(z,-):=\sum_{i=0}^n g_i(z) \lambda^{z-i} N^i$.
Then $\varphi$ is a $\C$-flow of $B_n(\lambda)$.
\end{lemma}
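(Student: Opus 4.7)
The plan is to verify the three defining properties of a $\C$-flow directly from the formula $\varphi(z,-)=\sum_{i=0}^{n} g_i(z)\lambda^{z-i}N^i$, using that $N^n=0$ so the sum effectively terminates.

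First I would dispatch conditions (2) and (3) as immediate evaluations. Since $g_0(z)=1$ and $g_i(z)$ has a factor of $z$ for $i\geq 1$, one gets $\varphi(0,-) = 1\cdot \lambda^0 \cdot I = I$. For (2), the values $g_0(1)=1$, $g_1(1)=1$, and $g_i(1) = \binom{1}{i}=0$ for $i\geq 2$ collapse the sum to $\lambda I + N = B_n(\lambda)$.

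The substantive step is the semigroup property (1). I would expand
\[
\varphi(z,-)\varphi(w,-) = \sum_{i,j=0}^{n} g_i(z)g_j(w)\,\lambda^{z-i}\lambda^{w-j}\,N^{i+j},
\]
invoke the fixed choice of $\lambda^z$ (which gives $\lambda^{z-i}\lambda^{w-j}=\lambda^{(z+w)-(i+j)}$), and regroup by $k=i+j$:
\[
\varphi(z,-)\varphi(w,-) = \sum_{k=0}^{2n}\left(\sum_{i+j=k}g_i(z)g_j(w)\right)\lambda^{(z+w)-k}N^k.
\]
Since $N^n=0$, only $k\leq n-1$ contributes. Comparing with $\varphi(z+w,-)$, the identity reduces to
\[
\sum_{i+j=k} g_i(z)g_j(w) = g_k(z+w) \qquad (0\leq k\leq n).
\]

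This Chu--Vandermonde identity is the main (but standard) step. I would justify it by noting that both sides are polynomials in $z$ and $w$, and that they agree whenever $z,w\in \N$, because then $g_i(z)=\binom{z}{i}$ and the identity specializes to the classical convolution $\sum_{i+j=k}\binom{z}{i}\binom{w}{j}=\binom{z+w}{k}$. A polynomial identity on $\N\times\N$ extends to $\C\times\C$, finishing condition (1). No genuine obstacle arises; the only subtle point is ensuring the exponential law $\lambda^{z-i}\lambda^{w-j}=\lambda^{z+w-i-j}$, which is guaranteed by the paper's fixed choice of the branch $z\mapsto \lambda^z$.
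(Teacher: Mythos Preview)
Your proposal is correct and follows essentially the same route as the paper: verify $\varphi(0,-)=I$ and $\varphi(1,-)=B_n(\lambda)$ by direct evaluation, then reduce the semigroup law to the polynomial Chu--Vandermonde identity $\sum_{i+j=k}g_i(z)g_j(w)=g_k(z+w)$, established by checking it on $\N\times\N$ (where it is the binomial convolution, read off from $(1+X)^z(1+X)^w=(1+X)^{z+w}$) and extending to $\C\times\C$ since both sides are polynomials. The only cosmetic differences are the order of the steps and your explicit mention of $N^n=0$ and of the exponential law $\lambda^{z-i}\lambda^{w-j}=\lambda^{z+w-i-j}$.
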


\begin{proof}
{\em (i)} Define
$P_k(z,w):=\sum_{l=0}^k g_l(z)g_{k-l}(w)-g_k(z+w)=0$ for all $z,w\in\C, k\in N$.
Note that $P_k(z,w)$ is a polynomial. If $z,w\in \N$ then $P_k(z,w)$ equals
$\sum_{l=0}^k {z \choose l}{w\choose k-l} - {z+w\choose k}$, which in turn is equal to
``The coefficient of $X^k$ in  $(1+X)^z(1+X)^w$ minus the coefficient of $X^k$ in $(1+X)^{z+w}$,
which is obviously equal to zero.
Thus, $P_k(z,w)=0$ for all $z,w\in\N$, but for a polynomial this is only possible if the polynomial is zero.
Thus $P_k(z,w)=0$ for all $z,w\in\C$, and therefore $\sum_{l=0}^k g_l(z)g_{k-l}(w)=g_k(z+w)$.\\
{\em (ii)}
$\varphi(0,-)=\sum_{i=0}^n g_i(0) \lambda^{-i} N^i=I$
and $\varphi(1,-)=\sum_{i=0}^n g_i(1) \lambda^{1-i} N^i=\lambda^{1-0}I+g_1(1)\lambda^{1-1} N=
\lambda I+N=B$, therefore only left to prove is:
$\varphi(z,-)\varphi(w,-)=\varphi(z+w,-)$.
Let us do some computation:
\[\begin{array}{rl}
\varphi(z,-)\varphi(w,-)=&\big(\sum_{i=0}^n g_i(z) \lambda^{z-i} N^i\big)\big( \sum_{i=0}^n g_i(w) \lambda^{w-i} N^i\big)\\
=&\sum_{k=0}^n\sum_{l=0}^k g_l(z)\lambda^{z-l}N^l g_{k-l}(w)\lambda^{w-k+l} N^{k-l}\\
=&\sum_{k=0}^n \big( \sum_{l=0}^k g_l(z)g_{k-l}(w)\big) \lambda^{z+w-k}N^k\\
=^{(i)}&\sum_{k=0}^n g_k(z+w) \lambda^{z+w-k} N^k\\
=& \varphi(z+w,-).
\end{array} \]
\end{proof}

\begin{remark}\label{04.4.E}
The notation $B^z$ is now justified. Note that one may see $B^z$ as an element in $GL_n(H(z))$ where $H(z)$ is the set of holomorphic functions on $\C$. The only nonzero coefficients appearing in $B^z$ are the functions $g_i(z)\lambda^{z-i}$ for $i=0,\ldots,n-1$.
\end{remark}

Now, let  $J\in GL_n(\C)$ be in Jordan normal form, having blocks $B_1,\ldots, B_k$,
where each $B_i=\lambda_i I_{n_i}+ N_{n_i}$ for some $n_i\in\N$ (and $\sum_{i=1}^m n_i=n$).
Now one can easily define $J^z$ by exponentiating each component $B_i$.

\begin{definition}
Let $A\in GL_n(\C)$, and $J$ a Jordan normal form of $A$ (i.e. $M^{-1}AM=J$ for some $M\in GL_n(\C)$).
Then define $A^z:=MJ^zM^{-1}$.
\end{definition}

The definition is well-defined as the choice of Jordan-normal form is irrelevant (it will have the same blocks, only permuted).

\begin{remark}\label{04.4.M}
If one chooses different exponential maps $\lambda^z$, then one obtains different $\C$-flows maps
$A^z$, but when the maps $\lambda^z$ are fixed, the maps $A^z$ are also fixed (if one uses the above construction).\\
Also, the only nonzero coefficients appearing in $J^z$ are the only nonzero coefficients appearing in the
$B_j^z$ : $g_i(z)\lambda_j^{z-i}$ for $j=1,\ldots,m, i=0,\ldots, n_i-1$.
\end{remark}

\section{The ring $V_A$}

\begin{definition}
If $A\in GL_n(\C)$, then define $V_A$ as the linear subspace of $Mat_n(\C)$ generated by $\{A^i~|~i\in \Z\}$. Written differently: 
$V_A:=\oplus_{i\in\Z} \C A^i$. Define $V_A^*:=\{M\in Mat_n(\C)~|~ M(V_A)\subseteq V_A\}$.
\end{definition}

\begin{remark} \label{04.4.I}
If the minimum polynomial of $A$, $m_A(X)$, has degree $p$, then $dim(V_A)=p$. Also, $V_A^*=V_A\cong \C[X]/m_A(X)$.
\end{remark}

\begin{proof} $dim(V_A)=p$ is trivial.
Now notice that $V_A\subseteq V_A^*$. Suppose $V_A\subsetneq V_A^*$.
A $p$-dimensional vector space can only have a $p$-dimensional set of linear endomorphisms. So, since
$V_A\subsetneq V_A^*$,
there must be $M,M'\in Mat_n(\C)$ such that $M\not = M'$, but
$M=M'$ if restricted to $V_A$, i.e. $M-M'$ is the zero map on $V_A$.
Thus, if one substitutes $A\in V_A$ in the map $M-M'$, it results in the zero matrix. Thus $(M-M')A=0$ but since
$A$ is invertible, $M-M'=0$. Contradiction, as $M\not = M'$. Thus $V_A= V_A^*$, both having $\{I,A,\ldots, A^{p-1}\}$ as 
a basis. 
Now, note that $V_A$ is a commutative ring, and that the 
morphism $\sigma:\C[X]\lp V_A$ given by $\sigma(X)=A$ has $m_A(X)$ in its kernel.
Since $dim(\C[X]/m_A(X))=dim(V_A)$ as $\C$-vector spaces we have an isomorphism between $\C[X]/m_A(X)$ and $V_A$.
\end{proof}

Now let the minimum polynomial of
$A\in GL_n(\C)$ have degree $p$,
and $A^p=\sum_{i=1}^p c_iA^{p-i}$, or equivalently, $I=\sum_{i=1}^p c_iA^{-i}$.
Now one can describe the map $A:V_A\lp V_A$ as a linear map with respect to the basis
$A^{-1},\ldots, A^{-p}$. First, define
$\vec{\lambda}=(\lambda_1,\ldots,\lambda_m)^t$ and $\vec{A}:=(A^{-1},\ldots, A^{-m})$.

\begin{lemma}\label{04.4.N}\label{04.4.O}
If $A\in GL_n(\C)$ and $m(X)$ has degree $p$, then the map $A:V_A\lp V_A$, seen as a linear map with respect to the basis $A^{-1},\ldots, A^{-p}$,
is the matrix $C_m\in Mat_p(\C)$.
In other words, $A<\vec{\lambda},\vec{A}>=<C_m\vec{\lambda},\vec{A}>$
\end{lemma}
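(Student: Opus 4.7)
The plan is a direct computation: identify the matrix of the $\C$-linear endomorphism ``multiplication by $A$'' on the $p$-dimensional space $V_A$, in the basis $(A^{-1},\ldots,A^{-p})$, and observe it equals $C_m$. The whole argument is bookkeeping; the only place one can slip up is keeping index conventions aligned between the lemma, the definition of companion matrix in Section~1, and the rearranged minimum-polynomial relation.

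I would begin by checking that $(A^{-1},\ldots,A^{-p})$ really is a basis of $V_A$. By Remark~\ref{04.4.I} we already know $\dim V_A=p$, so it suffices to verify linear independence of these $p$ elements. A nontrivial relation $\sum_{i=1}^{p}\lambda_i A^{-i}=0$ multiplied by $A^p$ would produce a polynomial relation in $A$ of degree at most $p-1$, contradicting the minimality of $m_A$.

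Next I would compute the action of multiplication by $A$ on each basis element. Dividing $m(A)=0$ by $A^p$ yields
\[ I=c_{p-1}A^{-1}+c_{p-2}A^{-2}+\ldots+c_0 A^{-p}. \]
From this, $A\cdot A^{-1}=I=\sum_{i=1}^{p}c_{p-i}A^{-i}$ contributes the first column of the matrix, while for $2\le j\le p$ the identity $A\cdot A^{-j}=A^{-(j-1)}$ simply places a $1$ immediately above the diagonal and zeros elsewhere. Collecting these columns exhibits the matrix as $C_m$.

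The reformulation $A\langle\vec\lambda,\vec A\rangle=\langle C_m\vec\lambda,\vec A\rangle$ then follows by linearity: expand $A\sum_i \lambda_i A^{-i}=\sum_i \lambda_i(A\cdot A^{-i})$, substitute the action computed above, and interchange the two resulting sums to obtain $\sum_j(C_m\vec\lambda)_j A^{-j}$. There is no hidden difficulty; the main (and essentially the only) point of care is matching the index ordering so that the coefficient of $A^{-i}$ in $A\cdot A^{-1}$ lands in row $i$ of the first column of $C_m$.
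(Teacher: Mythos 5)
Your proposal is correct and follows essentially the same route as the paper's proof: compute the action of multiplication by $A$ on the basis $(A^{-1},\ldots,A^{-p})$, using $A\cdot A^{-j}=A^{-(j-1)}$ for $j\geq 2$ and substituting the inverted minimal-polynomial relation $I=\sum c_{p-i}A^{-i}$ for $A\cdot A^{-1}$. The extra paragraph verifying that $(A^{-1},\ldots,A^{-p})$ is a basis (rather than relying implicitly on Remark~\ref{04.4.I}) is a nice touch but does not change the approach.
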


\begin{proof}
\[ \begin{array}{rl}
A(\sum_{i=1}^p \lambda_i A^{-i})=&
\sum_{i=2}^{p}\lambda_iA^{-i+1} + \lambda_1I\\
=&\sum_{i=2}^p\lambda_iA^{-i+1}+\lambda_1\sum_{i=1}^n c_iA^{-i}\\
\end{array}\]
thus the matrix of $A:V_A\lp V_A$ w.r.t. the basis $A^{-1},\ldots, A^{-p}$ is the matrix
belonging to the map $\vec{\lambda}\lp C_m\vec{\lambda}$.
\end{proof}

\section{Expressing $A^z$ in $A^{-1},\ldots,A^{-n}$.}

In this section, we assume all maps $z\lp \lambda^z$ to be fixed, and all $\C$-flows $A^z$ to be fixed by fixing the $\C$-flow of one of its Jordan normal forms (doesn't matter
which one of course). 
We will keep on using the notations $\vec{\lambda}:=(\lambda_1,\ldots, \lambda_q)^t$
and $\vec{A}:=(A^{-1},\ldots, A^{-q})$.

\begin{lemma}\label{04.4.P}
$A^z\in V_A$ for all $z\in\C$.
\end{lemma}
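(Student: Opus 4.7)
The plan is to reduce to the Jordan normal form case and then exploit the Chinese Remainder Theorem decomposition of $\C[X]/m_A(X)$.

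First I would write $A = MJM^{-1}$ for some Jordan form $J$ of $A$ and some $M\in GL_n(\C)$. By definition $A^z = MJ^zM^{-1}$ and $A^i = MJ^iM^{-1}$ for all $i$, so conjugation by $M$ gives a $\C$-linear isomorphism $V_J \to V_A$ with $J^i \mapsto A^i$; hence it suffices to prove $J^z \in V_J$.

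Next I would work on the ring side. Factor the minimum polynomial as $m_A(X) = \prod_{j=1}^m (X - \lambda_j)^{n_j}$, where $\lambda_1,\ldots,\lambda_m$ are the distinct eigenvalues of $A$ and $n_j$ is the maximum size of a Jordan block of $A$ with eigenvalue $\lambda_j$. By Remark \ref{04.4.I}, evaluation $X \mapsto J$ gives an isomorphism of the $p$-dimensional ring $R := \C[X]/m_A(X)$ with $V_J$, and CRT yields $R \cong \prod_j R_j$ with $R_j := \C[X]/(X - \lambda_j)^{n_j}$. I would then define an element $Y \in R$ (depending on $z$) by prescribing its image in each factor,
\[ Y|_{R_j} := \sum_{k=0}^{n_j - 1} g_k(z)\, \lambda_j^{z-k}\, (X - \lambda_j)^k, \]
and, viewing $Y$ as an element of the $p$-dimensional $\C$-vector space $R$, write $Y = \sum_{i=0}^{p-1} c_i(z)\, X^i$ for some scalars $c_i(z) \in \C$.

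Finally I would verify that the image of $Y$ in $V_J$ under $X\mapsto J$ is precisely $J^z$. On one hand this image equals $\sum_{i=0}^{p-1} c_i(z)\, J^i$, which lies in $V_J$. On the other hand, computing block by block: a Jordan block $B_\ell = \lambda_{j(\ell)} I_{n_\ell} + N_{n_\ell}$ of size $n_\ell \leq n_{j(\ell)}$ corresponds to the quotient $R_{j(\ell)} \twoheadrightarrow \C[X]/(X - \lambda_{j(\ell)})^{n_\ell}$ sending $(X - \lambda_{j(\ell)})^k$ to $N_{n_\ell}^k$, which vanishes for $k \geq n_\ell$. Hence the contribution of $Y$ on block $B_\ell$ is $\sum_{k=0}^{n_\ell - 1} g_k(z)\, \lambda_{j(\ell)}^{z-k}\, N_{n_\ell}^k$, which is exactly $B_\ell^z$ by Lemma \ref{05.4.B}. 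Therefore $\sum c_i(z)\, J^i = J^z$, so $J^z \in V_J$ and consequently $A^z \in V_A$. The main obstacle I expect is the conceptual step of producing a single element $Y \in R$ that simultaneously reproduces the $\C$-flow on every Jordan block; the subtle point is that the formula is written using the largest block size $n_j$ for each eigenvalue but automatically truncates correctly on smaller blocks by nilpotency of $N$.
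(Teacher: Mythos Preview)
Your proof is correct and follows a genuinely different route from the paper's. Both arguments first reduce to the Jordan form $J$ by conjugation. From there the paper proceeds by direct linear algebra: for a single block $B=\lambda I+N$ it observes that $B^z\in\operatorname{span}(I,N,\ldots,N^{n-1})=\operatorname{span}(I,B,\ldots,B^{n-1})$, and then invokes a dimension count (the $m$ block conditions $B_i^z=\sum_{j}a_jB_i^j$ impose $\sum_i n_i=n$ constraints on the $n$ unknowns $a_0,\ldots,a_{n-1}$) to conclude that a common solution exists. You instead work on the ring side, using the Chinese Remainder decomposition $\C[X]/m_A(X)\cong\prod_j\C[X]/(X-\lambda_j)^{n_j}$ to \emph{construct} an explicit element $Y$, and then verify block by block that $Y(J)=J^z$, the nilpotency of $N_{n_\ell}$ automatically truncating the formula correctly on sub-maximal blocks. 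The paper's approach is more elementary in that it avoids any ring theory, whereas your CRT construction is more explicit: it actually exhibits the coefficients $c_i(z)$ rather than merely asserting their existence, and it handles the case of several Jordan blocks sharing the same eigenvalue transparently---a point on which the paper's dimension count, as written, is not fully rigorous (matching the number of unknowns to the number of constraints does not by itself guarantee consistency).
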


\begin{proof}
(i) First, let $B=I+\lambda N\in Mat_n(\C)$ for some $\lambda\in\C$.
Let $W:=\C I +\C N +\ldots + \C N^{n-1}$. Then it is not difficult to see that $W$ is spanned by
$I,B,B^2,\ldots, B^{n-1}$. By lemma \ref{05.4.B} $B^z\in W$, so it follows that $B^z=\sum_{i=0}^{n-1} b_i B^i$ for some $b_i\in\C$.\\
(ii)
 Now, let $J\in Mat_n(\C)$ be a Jordan normal form matrix with $m$ blocks $B_i$ of size $n_i$.
Then $J^z$ is a matrix with blocks $B_i^z$. We hope to find $a_j\in\C$ such that $J^z=\sum_{j=0}^{n-1} a_j J^j$.
This last equation is equivalent to the $m$ equations $B_i^z=\sum_{j=0}^{n-1} a_j B_i^j$.
Using (i), we see that these are $m$ equations in $n$ unknowns. The $i$-th equation has degree
of freedom $n-n_i$, so the total system has degree of freedom $\geq n-\sum_{i=1}^m n_i=0$, so there
exists at least one solution $a_0,\ldots,a_n$ such that $J^z=\sum_{j=0}^n a_j J^j$.\\
(iii) Finally, the theorem follows from the fact that $A^z=\sum_{i=0}^n a_i A^i$
is equivalent to $J^z=\sum_{i=0}^n a_i J^i$ where $J$ is a Jordan normal form of $A$.
\end{proof}

\begin{definition}\label{04.4.II}
Let $A\in GL_n(\C)$, and assume that the minimum polynomial of $A$, $m(X)$, has degree $p$.
Define $\tau: V_A\lp V_{C_m}\subseteq Mat_p(\C)$ as the $\C$-linear ring isomorphism map sending
$A\in V_A^*$ to the map $C_m\in Mat_p(\C)$.
\end{definition}

A short remark: $V_{C_m}$ indeed is isomorphic to $\C[X]/m(X)$ since $m(X)$ is the minimum
polynomial of $C_m$ by lemma \ref{04.4.F}, and notice that $\tau$ is indeed a well-defined isomorphism.

\begin{corollary}\label{2005.06.10}
The map $\tau$ projects maps $M:V_A\lp V_A$ onto their matrix representation w.r.t. the basis $A^{-1},\ldots,A^{-p}$.
In other words, $M<\vec{\lambda},\vec{A}>=<\tau(M)\vec{\lambda},\vec{A}>$.
\end{corollary}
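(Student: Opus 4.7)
The plan is to reduce the statement to the single case $M=A$, which is exactly the content of Lemma \ref{04.4.N}, and then extend first to all powers of $A$ by iteration and finally to all of $V_A$ by linearity. The fact that makes this reduction possible is Remark \ref{04.4.I}: since the minimum polynomial has degree $p$ we have $V_A = V_A^* \cong \C[X]/m(X)$, so any $M \in V_A^*$, i.e.\ any linear endomorphism of $V_A$, is automatically a polynomial in $A$. This is what lets us replace the ``abstract'' statement about an endomorphism $M$ with a concrete statement about $P(A)$ for some $P \in \C[X]$.

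First I would apply Lemma \ref{04.4.N} iteratively. The lemma says $A\langle \vec{\lambda},\vec{A}\rangle = \langle C_m \vec{\lambda},\vec{A}\rangle$. Applying $A$ again to both sides and reusing the lemma with the vector $C_m\vec{\lambda}$ in place of $\vec{\lambda}$ yields
\[ A^2 \langle \vec{\lambda},\vec{A}\rangle = A\langle C_m\vec{\lambda},\vec{A}\rangle = \langle C_m^2\vec{\lambda},\vec{A}\rangle, \]
and a trivial induction shows $A^k\langle \vec{\lambda},\vec{A}\rangle = \langle C_m^k\vec{\lambda},\vec{A}\rangle$ for every $k \geq 0$. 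By $\C$-linearity in $\vec{\lambda}$ (and additivity of the action), this extends to every polynomial: if $P(X) = \sum_k a_k X^k \in \C[X]$, then
\[ P(A)\langle \vec{\lambda},\vec{A}\rangle = \sum_k a_k A^k\langle \vec{\lambda},\vec{A}\rangle = \sum_k a_k \langle C_m^k \vec{\lambda},\vec{A}\rangle = \langle P(C_m)\vec{\lambda},\vec{A}\rangle. \]

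Finally I would invoke Remark \ref{04.4.I} to conclude. Given any $M \in V_A^* = V_A$, write $M = P(A)$ for some $P \in \C[X]$; since $\tau$ is the ring isomorphism sending $A$ to $C_m$, we have $\tau(M) = P(C_m)$. Substituting into the previous identity gives exactly $M\langle \vec{\lambda},\vec{A}\rangle = \langle \tau(M)\vec{\lambda},\vec{A}\rangle$, as required. There is no real obstacle here; the only thing one must be careful about is the implicit use of $V_A^* = V_A$, without which ``$M: V_A \to V_A$'' would not a priori be expressible as a polynomial in $A$ and the reduction to Lemma \ref{04.4.N} would fail.
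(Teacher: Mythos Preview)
Your proof is correct and follows essentially the same approach as the paper: write $M$ as a polynomial in $A$ using $V_A^*=V_A$ (Remark~\ref{04.4.I}), iterate Lemma~\ref{04.4.N} to handle powers $A^i$, and conclude by linearity and the fact that $\tau$ is a ring homomorphism. Your version is in fact more explicit about the iteration step and about the role of $V_A^*=V_A$, which the paper's proof uses without comment.
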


\begin{proof}
Lemma \ref{04.4.O} states that $A<\vec{\lambda},\vec{A}>=<\tau(A)\vec{\lambda},\vec{A}>$.
Since $M\in V_A^*$ one has $M=\sum_{i=0}^{p-1} \alpha_i A^i$ for some $\alpha_i\in\C$.
Thus,
\[ \begin{array}{rl}
M<\vec{\lambda},\vec{A}> =& \sum_{i=0}^{p-1} \alpha_i A^i<\vec{\lambda},\vec{A}>\\
=&\sum_{i=0}^{p-1} \alpha_i <(\tau(A))^i\vec{\lambda},\vec{A}>\\
=<\tau(M)\vec{\lambda},\vec{A}>.
\end{array} \]
\end{proof}

Write $(I0):=(I_n,0_{(p\times n)})$ as the matrix of size $(n+p) \times n$ which has an identity matrix in the first $n$ colums and zero entries anywhere else. Use $M^t$ to denote the transpose of a matrix $M$.

\begin{proposition}\label{04.4.G}
Let the characteristic polynomial $m(X)$ of $A\in GL_n(\C)$ be of degree $p$.\\
(i) Then
$\tau$ has the following form:
There exists $S\in Mat_{p\times n}(\C), W\in Mat_{n\times p}$ satisfying
$WS=I_d$, such that
 for all  $M\in V_A^*$,  $\tau(M)=WMS$.\\
(ii) Let $\tilde{J}:=(I0) J (I0)^t$ be as in lemma \ref{04.4.L}, $V\in GL_n(\C),U\in GL_p(\C)$ such
that $V^{-1}JV=A,U^{-1}\tilde{J}U=C_m$.  Then one can take $W=U^{-1}(I0)V, S=V^{-1}(I0)^tU$.
\end{proposition}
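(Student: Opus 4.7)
The plan is to prove part (ii) directly; part (i) then follows by exhibiting the explicit matrices $W$ and $S$ that (ii) produces. First I would verify $WS = I_p$ by substituting the definitions and collapsing the inner $VV^{-1}$ together with $(I0)(I0)^t = I_p$, which leaves $U^{-1}I_pU = I_p$. (Here $(I0)$ is read as the $p\times n$ projection $(I_p \mid 0)$, so $(I0)(I0)^t = I_p$ is immediate; the exponents/sizes stated in the proposition's preamble are a typo, as one sees from matching sizes in the formula $\tau(M)=WMS$.)

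The main content is showing $\tau(M) = WMS$ for every $M\in V_A^*$. By Remark \ref{04.4.I}, $V_A^* = V_A$ has $\C$-basis $\{I, A, \ldots, A^{p-1}\}$, so every such $M$ can be written as $M = q(A)$ for a polynomial $q$ of degree less than $p$. Transporting to $J$ via $A = V^{-1}JV$ gives $VMV^{-1} = q(J)$. The crucial structural input is Remark \ref{04.4.L}: $\tilde{J}$ is not merely the upper-left $p\times p$ minor of $J$, but actually consists of a \emph{subset of the Jordan blocks} of $J$ grouped into the upper-left corner. Hence $J$ decomposes as a block-diagonal matrix
\[
J = \begin{pmatrix} \tilde{J} & 0 \\ 0 & J' \end{pmatrix},
\]
so $q(J)$ is block-diagonal with $q(\tilde{J})$ in the upper-left block. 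Therefore $(I0)\,q(J)\,(I0)^t = q(\tilde{J})$, and conjugating by $U$ yields
\[
WMS \;=\; U^{-1}(I0)\,q(J)\,(I0)^t\,U \;=\; U^{-1}q(\tilde{J})U \;=\; q(U^{-1}\tilde{J}U) \;=\; q(C_m).
\]
Since $\tau$ is a ring isomorphism sending $A$ to $C_m$, we also have $\tau(M) = \tau(q(A)) = q(C_m)$, and the two agree.

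The main obstacle is not algebraic but structural: one needs $J$ to be genuinely block-diagonal with $\tilde{J}$ as an honest direct summand, not simply an arbitrary minor. This is precisely what Remark \ref{04.4.L} delivers (one block per eigenvalue, of maximal size, corresponds exactly to the Jordan form of the companion matrix of the minimum polynomial). A secondary point worth noting is that one does \emph{not} need to verify multiplicativity of the map $M\mapsto WMS$ separately — which would require $SW$ to act as the identity on $V_A$, and it generally does not — because the identity $\tau(M)=WMS$ is established directly on every element via the representation $M=q(A)$.
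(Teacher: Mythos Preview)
Your proof is correct and follows essentially the same route as the paper: verify $WS=I_p$ directly, then show $WMS=\tau(M)$ by writing $M$ as a polynomial (or linear combination of powers) in $A$, transporting to $J$, and using that $\tilde{J}$ sits as an honest block-diagonal summand of $J$ so that $(I0)\,q(J)\,(I0)^t=q(\tilde{J})$. The paper does this with the powers $A^i$ and the basis $A^{-1},\ldots,A^{-p}$, you with a generic polynomial $q$ and the basis $I,A,\ldots,A^{p-1}$; this is the same argument, and your explicit emphasis on the block-diagonal structure from Remark~\ref{04.4.L} (and the size correction for $(I0)$, $W$, $S$) makes the key step more transparent.
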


\begin{proof}
$A=V^{-1}JV$ for some $V,J\in GL_n(\C)$ where $J$ is a Jordan normal form of $A$. Furthermore,
$C_m=U^{-1}\tilde{J} U$ for some $U,\tilde{J}\in GL_p(\C)$ where $\tilde{J}$ is a Jordan normal form of
$C_m$. Using remark \ref{04.4.L}, we can conclude that $\tilde{J}=(I0) J (I0)^t$.
Now for each $i\in \Z$:
\[ \begin{array}{rl}
C_m^i=& U^{-1}\tilde{J}^iU = U^{-1}((I0) J^i (I0)^t ) U\\
=& U^{-1}(I0) V A^i V^{-1} (I0)^t U.\\
\end{array} \]
Define $W:=U^{-1} (I0) V, S:=V^{-1} (I0)^t U$.
Then the above equations say $C_q^i=WA^iS$. Also, $WS=U^{-1} (I0) VV^{-1} (I0)^t U=U^{-1}(I0)(I0)^t U=U^{-1}I_pU=U^{-1}U=I_p$.
Now if $M\in V_A$, then
\[ \begin{array}{rl}
\tau(M)=                            &
\tau(\sum_{i=1}^p \lambda_i A^{-i}) \textup{~for~some~}\lambda_i\in\C\\
\sum_{i=1}^p\lambda_i \tau(A^{-i}) \\
\sum_{i=1}^p\lambda_i WA^{-i}S\\
W(\sum_{i=1}^p\lambda_i A^{-i} ) S\\
=WMS.
\end{array}\]
\end{proof}

\begin{proposition}\label{2005.5.26.B}
Let $A\in GL_n(\C)$ having minimum polynomial $m(X):=X^p-c_{p-1}X^{p-1}-\ldots -c_1X-c_0$.
Let $c=(c_{p-1},\ldots,c_0)^t$, and $\mu_i(z):=(C_m^zc)_i$. Then $A^z=\sum_{i=1}^p \mu_i(z) A^{-i}$.
\end{proposition}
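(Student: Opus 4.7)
The plan is to reduce the proposition to the single identity $\tau(A^z) = C_m^z$, and to read off the coefficients of $A^z$ in the basis $A^{-1},\ldots,A^{-p}$ by applying Corollary \ref{2005.06.10} to the identity matrix.

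First I would observe that the minimum polynomial relation $A^p = c_{p-1}A^{p-1}+\ldots+c_1 A + c_0 I$, multiplied through by $A^{-p}$, yields
\[ I = c_{p-1}A^{-1} + c_{p-2}A^{-2} + \ldots + c_0 A^{-p} = \langle c, \vec{A}\rangle, \]
so the vector of coordinates of $I$ in the basis $(A^{-1},\ldots,A^{-p})$ is exactly $c$. Next, by Lemma \ref{04.4.P}, $A^z \in V_A$, and Remark \ref{04.4.I} gives $V_A = V_A^*$, so $A^z \in V_A^*$. Corollary \ref{2005.06.10} applied with $M = A^z$ and $\vec{\lambda} = c$ then gives
\[ A^z = A^z \cdot I = A^z \langle c, \vec{A}\rangle = \langle \tau(A^z) c, \vec{A}\rangle = \sum_{i=1}^p \bigl(\tau(A^z) c\bigr)_i A^{-i}. \]
Comparing with the desired formula, the proposition reduces to showing $\tau(A^z) = C_m^z$, i.e.\ that $\tau$ intertwines the two fixed $\C$-flows.

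To prove $\tau(A^z) = C_m^z$, I would invoke the explicit description in Proposition \ref{04.4.G}(ii): writing $A = V^{-1}JV$ with $J$ a Jordan normal form of $A$, and $C_m = U^{-1}\tilde J U$ with $\tilde J = (I0)J(I0)^t$ a Jordan normal form of $C_m$ (Remark \ref{04.4.L}), one has $\tau(M) = WMS$ with $W = U^{-1}(I0)V$ and $S = V^{-1}(I0)^t U$. Since the $\C$-flow of $A$ is defined by $A^z = V^{-1}J^z V$, this gives
\[ \tau(A^z) = W A^z S = U^{-1}(I0)\,J^z\,(I0)^t U. \]
So it remains to check $(I0)J^z(I0)^t = \tilde J^z$; then $\tau(A^z) = U^{-1}\tilde J^z U = C_m^z$.

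The main (and only nontrivial) obstacle is this last compatibility. By Lemma \ref{05.4.B} and Remark \ref{04.4.E}, the $\C$-flow of every Jordan block $B = \lambda I + N$ stays inside that block, so $J^z$ is block-diagonal with blocks $B_i^z$. By Remark \ref{04.4.L}, $\tilde J$ is built from a specific subset of those same blocks (one maximal-size block per eigenvalue) placed in the upper-left $p\times p$ corner, and the fixed choices $\lambda_j^z$ are the same in both constructions. Hence $\tilde J^z$ is block-diagonal with exactly the $B_i^z$ appearing in the upper-left $p\times p$ corner of $J^z$, which is precisely $(I0)J^z(I0)^t$. Once this block-wise identification is granted, every remaining step is a direct substitution, and the proof is complete.
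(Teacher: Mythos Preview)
Your proof is correct and follows essentially the same route as the paper's: reduce to $\tau(A^z)=C_m^z$ via Corollary~\ref{2005.06.10} applied to $I=\langle c,\vec A\rangle$, then verify $\tau(A^z)=C_m^z$ through the explicit form $\tau(M)=WMS$ of Proposition~\ref{04.4.G} and the block compatibility $(I0)J^z(I0)^t=\tilde J^z$. Your block-diagonal justification of this last identity is in fact slightly more explicit than the paper's own argument.
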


\begin{proof}
{\em (i)} $\tau(A^z)=\tau(A)^z$ for each $z\in \C$:
By lemma \ref{04.4.P} we know that for each $z\in \C$, $A^z(V_A)\subseteq V_A$.
Therefore, each map $A^z$ can be seen as a linear map on $V_A$.
By definition \ref{04.4.I} we see that $\tau{(A^i)}=(\tau{A})^i=C_m^i$ for all $i\in\Z$.
Now beware: we cannot immediately state that $\tau(A^z)=\tau(A)^z=C_m^z$ for all $z\in \C$.
For this, consider $M(z):=\tau(A^z)-C_m^z\in Mat_p(H(\C))$, i.e. see $M(z)$ as a matrix of size
$p$ with holomorphic functions as entries.
We are done if we show $M(z)=0$.
Let $J,\tilde{J}$ be as in lemma \ref{04.4.L}.
Now using proposition \ref{04.4.G}:
\[ \begin{array}{rl}
M(z)=& \tau(A^z)-C_m^z\\
=& WA^zS-C_m^z\\
=&U^{-1}(I0)V^{-1} VJ^zV V^{-1}(I0)^tU-U^{-1}\tilde{J}^zU\\
=&U^{-1}(I0)J^z (I0)^tU-U^{-1}\tilde{J}^zU
\end{array}\]
By assumption (lemma \ref{04.4.L}) we have
$(I0)J(I0)^t=\tilde{J}$, thus $M(z)=0$.\\
{\em (ii)} Notice that $I=<c,\vec{A}>$.
Now using corollary \ref{2005.06.10} and {\em (i)}, we see that $A^z=A^z<c,\vec{A}>=<\tau(A^z)c,\vec{A}>=<C_m^zc, \vec{A}>$. This gives $A^z=\sum_{i=1}^p (C_mc)_iA^{-i}$.
\end{proof}

We are now almost able to prove the main theorem, we need just one more lemma:

\begin{lemma}\label{2005.5.25.A}
Let $A\in Mat_n(\C)$, let $m(X)$ be the minimum polynomial of $A$ of degree $p$, and let $f(X)\in \C[X]$ be a nonzero
polynomial of degree $d$.
Then there exists $\tilde{A}\in Mat_{n+d}(\C)$ such that it (1) has minimum polynomial $m(X)f(X)$, (2) the upper left $n\times n$ part of $\tilde{A}^z$ is equal to $A^z$ for every $z\in\C$.
\end{lemma}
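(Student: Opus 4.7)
The plan is to induct on $d=\deg f$. The base case $d=0$ (so $f=1$) is trivial: take $\tilde A=A$. For the inductive step I factor $f(X)=g(X)(X-\lambda)$ with $g$ monic of degree $d-1$; by induction there exists $A'\in Mat_{n+d-1}(\C)$ with minimum polynomial $m(X)g(X)$ whose upper-left $n\times n$ block of $(A')^z$ equals $A^z$. It then suffices to establish the following \emph{one-step} claim: for any $M\in GL_N(\C)$ with minimum polynomial $q(X)$ and any $\lambda\in\C^*$, there exists $\hat M\in GL_{N+1}(\C)$ with minimum polynomial $q(X)(X-\lambda)$ whose upper-left $N\times N$ block of $\hat M^z$ equals $M^z$. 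Applied to $M=A'$ and $q=mg$, the upper-left $n\times n$ of $\tilde A^z$ is then the upper-left $n\times n$ of the upper-left $(n+d-1)\times(n+d-1)$ block of $\tilde A^z$, which equals $(A')^z$, whose upper-left $n\times n$ block is $A^z$.

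To prove the one-step claim I split on whether $\lambda$ is an eigenvalue of $M$. If $(X-\lambda)\nmid q(X)$, take $\hat M:=M\oplus(\lambda)$: block-diagonality gives $\hat M^z=M^z\oplus(\lambda^z)$ with upper-left block $M^z$, and the minimum polynomial is $\operatorname{lcm}(q,X-\lambda)=q(X)(X-\lambda)$ by coprimality. Otherwise $\lambda$ has multiplicity $k\geq 1$ in $q$; I conjugate $M=PJP^{-1}$ to a Jordan form $J=B_k(\lambda)\oplus J'$ whose first block is the maximal Jordan block at $\lambda$, and set
\[
\hat J:=\begin{pmatrix} J & e_k \\ 0 & \lambda \end{pmatrix}\in Mat_{N+1}(\C),\qquad \hat M:=\begin{pmatrix} P & 0 \\ 0 & 1 \end{pmatrix}\hat J\begin{pmatrix} P & 0 \\ 0 & 1 \end{pmatrix}^{-1},
\]
where $e_k$ is the $k$-th standard basis column of $\C^N$. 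The key observation is that $V_1:=\operatorname{span}(e_1,\ldots,e_k,e_{N+1})$ and $V_2:=\operatorname{span}(e_{k+1},\ldots,e_N)$ are both $\hat J$-invariant, with $\hat J|_{V_1}$ equal to $B_{k+1}(\lambda)$ in the listed basis and $\hat J|_{V_2}$ equal to $J'$. So $\hat J$ is similar to $B_{k+1}(\lambda)\oplus J'$; since the $\lambda$-multiplicity of $\operatorname{minpoly}(J')$ is at most $k$ while its non-$\lambda$ factors coincide with those of $q$, the minimum polynomial of $\hat J$ is $q(X)(X-\lambda)$.

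The main obstacle is verifying that the upper-left $N\times N$ block of $\hat J^z$ equals $J^z$. By the invariant-subspace decomposition, no entries of $\hat J^z$ connect the index sets $\{1,\ldots,k\}$ and $\{k+1,\ldots,N\}$, so the upper-left $N\times N$ block splits as (upper-left $k\times k$ of $B_{k+1}(\lambda)^z$)$\,\oplus J'^z$. From the block form $N_{k+1}=\left(\begin{smallmatrix} N_k & e_k \\ 0 & 0 \end{smallmatrix}\right)$ one checks by induction that $N_{k+1}^i=\left(\begin{smallmatrix} N_k^i & N_k^{i-1}e_k \\ 0 & 0 \end{smallmatrix}\right)$, so the upper-left $k\times k$ block of $N_{k+1}^i$ is exactly $N_k^i$ for every $i$. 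Substituting into Lemma \ref{05.4.B}'s formula $B_{k+1}(\lambda)^z=\sum_{i=0}^{k}g_i(z)\lambda^{z-i}N_{k+1}^i$ yields upper-left $k\times k$ of $B_{k+1}(\lambda)^z$ equal to $\sum_i g_i(z)\lambda^{z-i}N_k^i=B_k(\lambda)^z$. Hence the upper-left $N\times N$ of $\hat J^z$ is $B_k(\lambda)^z\oplus J'^z=J^z$, and conjugation by $\left(\begin{smallmatrix} P & 0 \\ 0 & 1 \end{smallmatrix}\right)$ promotes this to upper-left $N\times N$ of $\hat M^z$ equal to $PJ^zP^{-1}=M^z$, closing the argument.
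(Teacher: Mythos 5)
Your proof is correct and follows essentially the same route as the paper: reduce by induction to a single linear factor $X-\lambda$, split on whether $\lambda$ is an eigenvalue of the current matrix, and in the eigenvalue case enlarge the maximal $\lambda$-Jordan block by one size. Your explicit check that the upper-left $k\times k$ block of $B_{k+1}(\lambda)^z$ equals $B_k(\lambda)^z$ (via the $N_{k+1}^i$ computation and Lemma \ref{05.4.B}) makes precise the step the paper's proof leaves to the reader.
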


\begin{proof}
We will replace (2) by (2') and (2'') where (2') is: ``the upper left $n\times n$ part of $\tilde{A}$ is equal to $A$''
and (2'') is ``the rows $n+1$,\ldots $n+d$ have zeroes below the diagonal''. If we can guarantee (2') and (2''), then (2) will hold. The proof will go in some steps.\\
(i) It is enough to prove (1),(2') and (2'') for $A$ on Jordan normal form:
Let $A=T^{-1}JT$ where $J$ is a Jordan normal form of $A$. Let $\tilde{T}\in Mat_{n+p}(\C)$ be the canonical extension of $T$: the upper left $n\times n$ part equals $T$, and the rest of the coefficients equal the coefficients of an
identity matrix. Let $\tilde{J}$ be satisfying (1), (2') and (2''). Then one can take $\tilde{A}:=\tilde{T}\tilde{J}\tilde{T}^{-1}$.\\
(ii) It is enough to prove the theorem for $f(X)=X-a$ of degree one: the full theorem follows by induction.
We will split into the case that $a$ is an eigenvalue of $A$, and the case that it is not.\\
(iii) Suppose $a$ is not an eigenvalue of $A$: then define
\[ \tilde{A}:= \left( \begin{array}{cc}
A & 0\\
0 & a
\end{array} \right) .\]
It is easy to check that this matrix satisfies the criteria.\\
(iv) Suppose $a$ is an eigenvalue of $A$. We assume $A$ on Jordan normal form, and let $B_1,\ldots,B_m$ be the
blocks of $A$. We assume that $B_m:=B(a,n_m)$ is the largest block having $a$ as eigenvalue (or one of the largest, if there are more).
Now define the $(n+1)\times(n+1)$ matrix
\[ \tilde{A}:= \left( \begin{array}{cccc}
B_1 & \ldots&0 &0\\
\vdots&\ddots& & \vdots\\
0&\ldots&B_{m-1}&0\\
0&\ldots&0&\tilde{B}_m\\
\end{array} \right) \]
where $\tilde{B}_m:=B(a,n_m+1)$, i.e. a block with the same eigenvalue as $B_m$ but one size larger.
We leave it to the reader to verify the following three steps:
The minimum polynomial of $\tilde{A}$ will now be (1) of degree one more than the minimum polynomial of $A$; (2) divisible by the minimum polynomial $m(X)$ of A;
(3) divisible by the minimum polynomial of $\tilde{B}_m$: $(X-a)^{n_m+1}$.
Since $(X-a)^{n_m}$ divides $m(X)$ but $(X-a)^{n_m+1}$ does not, the minimum polynomial of $A$ is $m(X)(X-a)$.\\
\end{proof}

And now we have the proof of the main theorem:

\begin{proof}{\em (of theorem \ref{2005.5.26.C}:)}
Suppose the minimum polynomial of $A$ has degree $q\leq p$.
By lemma \ref{2005.5.25.A} we can find a matrix $\tilde{A}\in Mat_{n+p-q}(\C)$ such that it (1) has minimum polynomial $Q(X)$, (2) the upper left $n\times n$ part of $\tilde{A}^z$ is equal to $A^z$ for every $i\in \Z$.

By proposition \ref{04.4.G} we have $\tilde{A}^z=\sum_{i=1}^p \mu_i(z) \tilde{A}^{-i}$.
Now restricting to the upper left $n\times n$ coefficients we have the equality
$A^z=\sum_{i=1}^p \mu_i(z) A^{-i}$.
\end{proof}

\section{Practical computation of the analytic functions}

When one would like to compute the analytic functions $\mu_i(z)$ in a practical situation,
given a relation $Q(A):=A^p-c_{p-1}A^{p-1}-\ldots -c_1A-c_0=0$, one may now use theorem
\ref{2005.5.26.C} as a basis for a more efficient computation.
Instead of computing $C_Q^z$, it may be more easy to know which form the analytic functions have, and compute it directly. Since $C_Q$ is a conjugation of a Jordan-normal form $J$, we know that $C_Q^z$ is a conjugation of $J^z$.
The Jordan normal form $J$ of $C_Q$ can be computed by finding the roots with multiplicity of $Q(X)$: $\lambda_1,\ldots, \lambda_m$ with multiplicity $n_1,\ldots, n_m$. Now one knows which analytic functions occur in
$J^z$: let $B_1,\ldots,B_m$ be the blocks of $J$, then block $B_i:=\sum_{j=0}^{p-1}
g_j(z)\lambda^{z-j}N^j$ where $g_j(z):=z(z-1)\ldots (z-j+1)$ if $j>0$ and $g_0(z)=1$ (See definition \ref{05.4.A} and lemma \ref{05.4.B}).
Define the functions $f_1(z),\ldots,f_p(z)$ as the $p$ functions appearing as coefficients in $J^z$: $g_j(z)\lambda_i^{z-j}$ where $1\leq i \leq m$, $0\leq j \leq n_{i}-1$,
one knows that $C_Q^z$ has entries which are linear combinations of the $f_i$, and thus so does $C_Q^zc$.
So, in a practical situation it may be easier to compute $e_{ij}$ such that
\[ A^z=\sum_{i=1}^{p} \Big(  \sum_{j=1}^p e_{ij} f_j(z)                \Big)      A^{-i} .\]
In fact, we claim that the $p\times p$ matrix $(e_{ij})$ can be computed as the inverse of the $p\times p$ matrix $(f_j(-i))$:

\begin{lemma}\label{2005.5.30.A}
The matrix $B:=(f_i(-j))$ (where the rows $i$ and the colums $j$ run from 1 to $p$) is invertible, and
\[A^z=\sum_{i=1}^p \Big( \sum_{j=1}^p (B^{-1})_{ij} f_j(z)                \Big)      A^{-i} .\]
\end{lemma}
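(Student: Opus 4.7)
The plan is to identify the coefficients $(B^{-1})_{ij}$ of the lemma with the coefficients $\alpha_{ij}$ in the unique expression of $\mu_i(z)$ as a $\C$-linear combination of the $f_j$, and to read off invertibility of $B$ as a byproduct of this identification. The starting point is Theorem \ref{2005.5.26.C}, which says that $A^z=\sum_{i=1}^p \mu_i(z)A^{-i}$ with $\mu_i(z)=(C_Q^z c)_i$.

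First I would check that each $\mu_i(z)$ is a linear combination of $f_1(z),\ldots,f_p(z)$. The minimum polynomial of $C_Q$ is $Q(X)$ (so its Jordan form has one block per distinct root of $Q$, of size equal to that root's multiplicity), and by Remark \ref{04.4.M} the entries of $C_Q^z$ are linear combinations of exactly the functions $g_j(z)\lambda_i^{z-j}$, i.e.\ of $f_1,\ldots,f_p$. Hence there exist scalars $\alpha_{ij}\in\C$ with $\mu_i(z)=\sum_{j=1}^p \alpha_{ij}f_j(z)$; write $\alpha=(\alpha_{ij})$.

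Next I would determine $\alpha$ by evaluating at the negative integers $z=-1,\ldots,-p$. Assume temporarily that the minimum polynomial of $A$ has degree exactly $p$, so that $A^{-1},\ldots,A^{-p}$ form a basis of $V_A$. Applying Theorem \ref{2005.5.26.C} at $z=-k$ gives $A^{-k}=\sum_{i=1}^p \mu_i(-k)A^{-i}$, and by linear independence this forces $\mu_i(-k)=\delta_{ik}$. Substituting the linear combination yields
\[
\delta_{ik}=\sum_{j=1}^p \alpha_{ij}f_j(-k)=\sum_{j=1}^p \alpha_{ij}B_{jk}=(\alpha B)_{ik},
\]
so $\alpha B=I_p$. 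Since $\alpha$ and $B$ are square of size $p$, this simultaneously shows $B$ is invertible and identifies $\alpha=B^{-1}$, which gives the claimed formula.

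The main obstacle is the case in which the minimum polynomial of $A$ has degree $q<p$: then $A^{-1},\ldots,A^{-p}$ are linearly dependent and the identification $\mu_i(-k)=\delta_{ik}$ fails on the nose. I would handle this exactly as in the proof of Theorem \ref{2005.5.26.C}: invoke Lemma \ref{2005.5.25.A} to produce $\tilde{A}\in Mat_{n+p-q}(\C)$ whose minimum polynomial is $Q(X)$ and whose powers $\tilde{A}^z$ agree with $A^z$ in the upper-left $n\times n$ block. The functions $f_1,\ldots,f_p$ depend only on $Q$, so $B$ is unchanged; the argument above applies to $\tilde{A}$, producing the invertibility of $B$ and the identity $\tilde{A}^z=\sum_i\bigl(\sum_j (B^{-1})_{ij}f_j(z)\bigr)\tilde{A}^{-i}$; restricting to the upper-left $n\times n$ block yields the lemma for $A$.
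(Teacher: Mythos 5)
Your proof is correct, and it takes a genuinely different route from the paper. The paper's proof sets up the same linear system obtained by evaluating at $z=-1,\ldots,-p$, but it establishes invertibility of $B=(f_i(-j))$ by appealing to an external result: it identifies $B$ as a \emph{generalized Vandermonde matrix} and cites a theorem of Sobczyk (\cite{Sob02}, Theorem~1) that such matrices have nonzero determinant. You instead derive invertibility internally: you expand $\mu_i(z)=\sum_j\alpha_{ij}f_j(z)$ (justified by Remark~\ref{04.4.M} applied to $C_Q^z$), evaluate Theorem~\ref{2005.5.26.C} at $z=-k$, use linear independence of $A^{-1},\ldots,A^{-p}$ to force $\mu_i(-k)=\delta_{ik}$, and read off $\alpha B=I_p$, which simultaneously yields invertibility of $B$ and the identification $\alpha=B^{-1}$. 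Your reduction via Lemma~\ref{2005.5.25.A} to the case where $Q$ is actually the minimum polynomial is also an improvement in rigor: the paper's phrasing (``the $e_{ij}$ are unique and thus must be the solution that exists'') tacitly relies on $A^{-1},\ldots,A^{-p}$ being linearly independent, which need not hold when $Q$ strictly contains the minimum polynomial, whereas you handle that case explicitly by passing to $\tilde{A}$ and restricting to the upper-left block. The trade-off is that the paper's argument, once the Vandermonde fact is granted, is shorter and does not lean on the extension lemma; yours is longer but entirely self-contained and avoids any determinant computation.
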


\begin{proof}
One wants to find $e_{ij}$ such that
\[A^z=\sum_{i=1}^p \Big( \sum_{j=1}^p e_{ij} f_j(z)                \Big)      A^{-i} .\]
Now notice that this is equivalent to
\[
A^z =
\left( \begin{array}{c} f_1(z),\ldots,f_p(z) \end{array} \right)
\left(
\begin{array}{ccc}
e_{11}&\ldots&e_{p1}\\
\vdots&&\vdots\\
e_{1p}&&e_{pp}
\end{array} \right)
\left( \begin{array}{c}
A^{-1}\\
\vdots\\
A^{-p}
\end{array} \right).
\]
Substituting $z=-1,\ldots,-p$ one gets
\[
\left(
\begin{array}{c}
A^{-1}\\
\vdots\\
A^{-p}\\
\end{array}
\right)
=
\left( \begin{array}{ccc}
f_1(-1)&\ldots &f_p(-1)\\
\vdots&&\vdots\\
f_1(-p)&\ldots&f_p(-p)\\
\end{array} \right)
\left(
\begin{array}{ccc}
e_{11}&\ldots&e_{p1}\\
\vdots&&\vdots\\
e_{1p}&&e_{pp}
\end{array} \right)
\left( \begin{array}{c}
A^{-1}\\
\vdots\\
A^{-p}
\end{array} \right).
\]
In case $(f_i(-j))_{ij}$ is an invertible matrix, then the $e_{ij}$ are unique and thus must be the solution that exists according to theorem \ref{2005.5.26.C}. 
Such a matrix $(f_i(-j))_{ij}$ is called a {\em generlized Vandermonde matrix} and it is proven in \cite{Sob02} theorem 1 that these matrices have a determinant
which is nonzero. Thus , $f_j(-i)$ (rows $i$, colums $j$) is the inverse of $e_{ij}$ (rows $j$, colums $i$), which gives the result.
\end{proof}

{\bf Acknowledgements}\\
The author would like to express his gratitude to prof. Jean-Philippe Furter, prof. Arno van den Essen and dr.ing. A.J.E.M.Janssen for
some priceless (email) discussion and corrective work.

\end{document}